\newtheorem{prop}{Proposition}{\bf}{\it}
\newtheorem{definition}{Definition}{\bf}{\it}
\newtheorem{proof}{Proof}{\it}{\rm}
\title{Efficient labeling algorithms for adjacent quadratic shortest paths}
\author{
\href{https://orcid.org/0000-0002-2097-4131}{\includegraphics[scale=0.06]{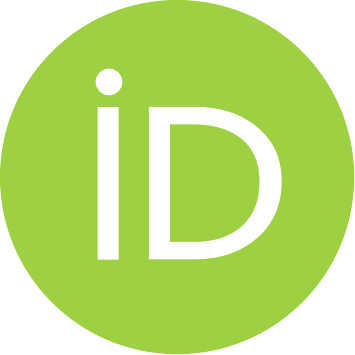}\hspace{1mm}João Vilela} \\
Departamento de Engenharia Industrial \\
Pontif\'{\i}cia Universidade Cat\'olica do Rio de Janeiro \\
\texttt{jvilela@aluno.puc-rio.br} \\
\And
\href{https://orcid.org/0000-0003-0104-3419}{\includegraphics[scale=0.06]{orcid.pdf}\hspace{1mm}Bruno Fanzeres} \\
Departamento de Engenharia Industrial \\
Pontif\'{\i}cia Universidade Cat\'olica do Rio de Janeiro \\
\texttt{bruno.santos@puc-rio.br} \\
\And
\href{https://orcid.org/0000-0001-5715-149X}{\includegraphics[scale=0.06]{orcid.pdf}\hspace{1mm}Rafael Martinelli} \\
Departamento de Engenharia Industrial \\
Pontif\'{\i}cia Universidade Cat\'olica do Rio de Janeiro \\
\texttt{martinelli@puc-rio.br} \\
\And
\phantom{xxxxxxxxxxxxxx}\href{https://orcid.org/0000-0001-7595-3904}{\includegraphics[scale=0.06]{orcid.pdf}\hspace{1mm}Claudio Contardo} \phantom{xxxxxxxxxxxxxx}\\
UQAM School of Business, \\
CIRRELT and GERAD \\
\texttt{claudio.contardo@gerad.ca}
}
\date{}
\begin{document}
\maketitle

\begin{abstract}
In this article, we study the Adjacent Quadratic Shortest Path Problem (AQSPP), which consists in finding the shortest path on a directed graph when its total weight component also includes the impact of consecutive arcs. We provide a formal description of the AQSPP and propose an extension of Dijkstra's algorithm (that we denote \texttt{aqD}) for solving AQSPPs in polynomial-time and provide a proof for its correctness under some mild assumptions. Furthermore, we introduce an adjacent quadratic A\textsuperscript{*} algorithm (that we denote \texttt{aqA\textsuperscript{*}}) with a backward search for cost-to-go estimation to speed up the search. We assess the performance of both algorithms by comparing their relative performance with benchmark algorithms from the scientific literature and carry out a thorough collection of sensitivity analysis of the methods on a set of problem characteristics using randomly generated graphs. Numerical results suggest that: (i) \texttt{aqA\textsuperscript{*}} outperforms all other algorithms, with a performance being about 75 times faster than \texttt{aqD} and the fastest alternative; (ii) the proposed solution procedures do not lose efficiency when the magnitude of quadratic costs vary; (iii) \texttt{aqA\textsuperscript{*}} and \texttt{aqD} are fastest on random graph instances, compared with benchmark algorithms from scientific literature. We conclude the numerical experiments by presenting a stress test of the AQSPP in the context of real grid graph instances, with sizes up to $16 \times 10^6$ nodes, $64 \times 10^6$ arcs, and $10^9$ quadratic arcs.
\end{abstract}

\keywords{Shortest Path Problem \and Adjacent Quadratic Shortest Path Problem \and Backward Cost-to-Go Estimation \and $\alpha$-Cycle \and Spatial Routing}


\section{Introduction} \label{SecI_introduction}

    Finding the least-weighted path between two vertices in a graph is a well-studied problem in computer science and has been deeply explored in the technical literature both from a methodological perspective \citep{pollack1960, ahuja1993_NetworkFlow, madkour2017} as well as from an applied point of view \citep{Duque2015_BiObjectiveSPP, Hougardy2017_DijkstraMeetsSteiner, Furini2020_Int_Bilevel_kVertex, Russ2021_ConstrRelSPP_TimeDepNet}. This problem is usually referred to as the \textit{Shortest Path Problem} (SPP) and aims at finding a path $P$ on a weighted graph $G = (V,A, \boldsymbol{c})$ that connects a source node $s \in V$ with a target node $t \in V$ of minimal total weight. Structurally, the SPP can be formulated as an integer programming problem with a linear objective function $\boldsymbol{c}^{\top} \mathbf{x}$, where $\boldsymbol{c} \in \mathbb{R}^{|A|}$ represents an edges-wise weight vector and $\mathbf{x} \in \{0, 1\}^{|A|}$ indicates a binary vector of arc selections, constrained to a network-flow-based feasible region, that builds up the shortest path \citep{Taccari2016_IPForm_ESPP}. Many procedures have been developed over the past decades to solve the SPP efficiently (e.g., Bellman-Ford \citep{bellman1958}, Floyd-Warshall \citep{floyd1962}, and A\textsuperscript{*} \citep{hart1968} algorithms), with the most well-known approach due to Dijkstra \citep{dijkstra1959}.

    Although the linear nature of SPPs can accommodate a large collection of problem structures, some applications demand more complex interactions and cost-impacts between edges within the path, as, for instance, the problems of establishing network protocols \citep{murakami1997}, optimizing vehicle routing \citep{martinelli2015}, and identifying variance-constrained shortest paths \citep{Sivakumar1994_VarConstSPP, Sen2001_MeanVarTravel}, all intrinsically following a quadratic order. In this context, the \textit{Quadratic Shortest Path Problem} (QSPP) emerges as a powerful modeling tool to properly characterize a broader class of problems since it extends the standard SPP structure by identifying a path that jointly minimizes both the total path weight and the sum of interaction costs over distinct pairs of edges on the path. Formally, the QSPP can be stated over a weighted digraph $G = (V,A,\boldsymbol{c},\boldsymbol{q})$ as a quadratic integer programming problem \citep{Gill2015_MethGenQP} with objective function defined as $\mathbf{x}^{\top} \boldsymbol{q} \mathbf{x} + \boldsymbol{c}^{\top} \mathbf{x}$, where $\boldsymbol{q} \in \mathbb{R}^{|A|} \times \mathbb{R}^{|A|}$ is a square matrix that maps the weight impact of pair of arcs and $(\boldsymbol{c}, \mathbf{x})$ follows similarly to the SPP formulation \citep{rostami2015}.
%
    In this work, we focus on a particular structure of QSPPs, referred to in technical literature as the \textit{Adjacent Quadratic Shortest Path Problem} (AQSPP), which follows the same quadratic structure, but in which the matrix $\boldsymbol{q}$ only accounts for interactions among adjacent arcs. 

    The theory and applicability of QSPP/AQSPP have been studied through the seminal work of \cite{Amaldi2011_ReloadCost} and further extended by \cite{rostami2015, rostami2018}, \cite{hu2017, hu2018} and \cite{buchheim2015}. \cite{Amaldi2011_ReloadCost} investigated the issues over complexity and approximability in problems that seek for optimum paths, tours, and flows under the so-called reload costs, defined as the cost incurred by the path when two adjacent arcs are of ``different types''. Moreover, \cite{buchheim2015} explores the more general framework of \textit{Binary Quadratic Problems} (BQPs) and propose an exact algorithm for this class of optimization problems based on computing quadratic global underestimators and embedding the resulting lower bounds into branch-and-bound techniques. \cite{rostami2015} provides an in-depth mathematical description of QSPPs and proposes an algorithm to solve AQSPPs in polynomial-time based on a procedure that reduces the AQSPP to an instance of the SPP by modifying the original graph. Furthermore, \cite{hu2018} also characterize QSPPs mathematically and derive an algorithm that examines whether QSPP instances of directed grid graphs are ``linearizable''. More precisely, the authors evaluate when a grid graph with quadratic cost can be transformed into a graph with linear costs. They also reviewed the developments presented by Rostami et al. in \cite{rostami2015} and prove that, despite stated in \cite{rostami2015}, their algorithm does not find optimal solutions in polynomial time for general digraphs, but rather only for the particular case of acyclic digraphs.


    Therefore, following this latter observation and technical issue, in this work, we study computationally efficient solution methods to solve AQSPPs on a broader class of graphs, the ones that do not possess improving $\alpha$-cycles, defined as follows. 
    
    \begin{definition}
    An $\alpha$-cycle is a walk $(u_1, \ldots, u_n)$ in the underlying graph with $n \geq 5$ such that $u_2 = u_{n - 1}$. An $\alpha$-cycle is said to be \textit{improving} if its total compounded weight is lower than that of the shorter sequence $(u_1, u_2, u_n)$.
    \end{definition}
    
    Note that an improving $\alpha$-cycle may exist even without negative weights due to their quadratic nature. In Figure \ref{fig:alphacyc}, we depict an $\alpha$-cycle of five nodes. The $\alpha$-cycle depicted in this figure would be improving if, for instance, one had a substantial quadratic weight impact for the sequence $(1, 2, 5)$ and very low for all other ones.
    We argue that graphs with no improving $\alpha$-cycles are more general than acyclic ones since, by definition, the latter do not possess $\alpha$-cycles, but the opposite does not hold. These types of graphs are found in several practical applications, as, for instance, on graphs that represent spatial attributes (terrain, distance, slope, etc.), wildly used on route-design problems \citep{wan2011, ebrahimipoor2009, Barmann2015_SolveNetworkDsgnProb, piveteau2017}.
    
    \begin{figure}[htbp]
    	\centering
    \begin{tikzpicture}
    	\tikzset{main node/.style={circle,fill=blue!20,draw,minimum size=0.5cm,inner sep=1pt},
    	}
    	\node[main node] (1) {$1$};
    	\node[main node] (2) [below left = of 1]  {$2$};
    	\node[main node] (3) [below left = of 2] {$3$};
    	\node[main node] (4) [above left = of 2] {$4$};
    	\node[main node] (5) [below right = of 2] {$5$};
    	
    	\path[draw,thick,->]
    	(1) edge node {} (2)
    	(2) edge node {} (3)
    	(3) edge node {} (4)
    	(4) edge node {} (2)
    	(2) edge node {} (5);
    \end{tikzpicture}
\caption{An $\alpha$-cycle of five nodes\label{fig:alphacyc}}
	\end{figure}
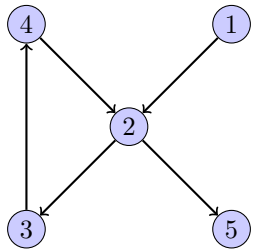

This work aims to contribute to the scientific literature on AQSPPs by designing an extension of Dijkstra's algorithm (from now on referred to as \texttt{aqD}) to solve AQSPP on graphs with no improving $\alpha$-cycles. More specifically, we demonstrate that our algorithm finds optimal walks (paths possibly with cycles) in the absence of negative weights and acyclic paths when the graph is free of improving $\alpha$-cycles. An acceleration to the algorithm is also proposed: an adjacent quadratic A\textsuperscript{*} with a backward search for cost-to-go estimation (hereinafter referred to as \texttt{aqA\textsuperscript{*}}). We perform a thorough computational campaign to analyze the proposed algorithms' performances. First, we measure the relative speed of \texttt{aqD} and \texttt{aqA\textsuperscript{*}} against the linearization-based algorithm designed in \cite{rostami2015, rostami2018}. Next, we analyze the sensitivity of our methods to variations in the quadratic weight levels. Then, we empirically evaluate the performance of our two algorithms on randomly generated graphs using the Erdős-Rényi \citep{erdos-renyi1959, Bollobas2001} and Configuration Model \citep{newman2010} methods. The instances used for these computational tests range from $6 \times 10^5$ to $6.4 \times 10^7$ quadratic arcs. Finally, we stress the search process by increasing the graph sizes up to $16 \times 10^6$ nodes, $64 \times 10^6$ arcs, and $10^9$ quadratic arcs.

The outline of this work is as follows. In the next section, we present the formal problem definition with a quadratic formulation for the AQSPP. In Section \ref{sec:aqspp}, we introduce the extension of the classical Dijkstra's algorithm for the adjacent quadratic case and provide a proof for its correctness in the absence of improving $\alpha$-cycles. Section \ref{sec:astar} extends the proposed solution procedure using a cost-to-go estimation to obtain an A\textsuperscript{*} algorithm. Section \ref{sec:on-the-fly} describes how to speed up the algorithm even further by calculating the quadratic costs on the fly to handle vary-large scale AQSPPs instances. In Section \ref{sec:experiments}, we compare the proposed algorithms against benchmark approaches from scientific literature using different graph types. Finally, Section \ref{sec:conclusion} concludes this work and points the avenues of future research.

    
\section{Problem Definition} \label{sec:definition}

    The AQSPP is defined over a weighted digraph $G = (V, A, \boldsymbol{c}, \boldsymbol{q})$, where $(V, A)$ denote the sets of nodes and arcs of the network, respectively, $\boldsymbol{c} \in \mathbb{R}_+^{|A|}$ stands for the vector of linear costs, and $\boldsymbol{q} \in \mathbb{R}_+^{|A| \times |A|}$ represents the respective matrix of quadratic costs, where we assume that $q(a_1, a_2) = 0$ if $h(a_1) \neq t(a_2)$, where the functions $t(\cdot)$ and $h(\cdot)$ stand for the \textit{head} and the \textit{tail} of an arc, respectively. This assumption allows us to overload the notation and denote $q(u, v, w)$ instead of $q((u, v), (v, w))$ for every pair of consecutive arcs in $A\times A$. In addition to $G$, we are also given a source node $s\in V$ and a target node $t\in V\setminus\{s\}$. For every node $u\in V$ we denote $\delta^+(u) = \{a\in A: t(a) = u\}$ and $\delta^-(u) = \{a\in A: h(a) = u\}$.
    
    The AQSPP can be formulated as a binary quadratic program (BQP), as presented in \eqref{eq:AQSPP_FO}--\eqref{eq:aqspp:bin}. We are given, for every arc $a\in A$, a binary variable $x_a$ that takes the value 1 if $a$ is part of the shortest path. The following BQP is valid for the AQSPP:
    \begin{equation}
        \min_{\mathbf{x}} \quad \sum_{a\in A} c(a)x_a + \sum_{(a, b)\in A \times A}q(a, b)x_ax_b \label{eq:AQSPP_FO}
    \end{equation}
    subject to
    \begin{align}
        \sum_{a\in\delta^+(u)}x_a - \sum_{a\in\delta^-(u)}x_a &= \begin{cases}
                                                                    1 & \hspace{0.50cm} u = s\\
                                                                    -1 & \hspace{0.50cm} u = t\\
                                                                    0 & \hspace{0.50cm} u \in V \setminus \{s, t\}
                                                                    \end{cases} & u\in V\label{eq:aqspp:flow}\\
        x_a&\in\{0, 1\} & a\in A.\label{eq:aqspp:bin}
    \end{align}
    
    Note that the feasible region defined by \eqref{eq:aqspp:flow}-\eqref{eq:aqspp:bin} on the AQSPP formulation is equivalent to the standard SPP \citep{Taccari2016_IPForm_ESPP}. Thus, all paths that are feasible for one of them, are also for the other. The main difference between the linear and the quadratic case comes from introducing the quadratic term $q(a, b) x_{a} x_{b}$ in the objective function \eqref{eq:AQSPP_FO}. Due to the total unimodularity characteristic of the constraint matrix, one may be tempted to drop the integrality constraints for the binary variables. However, note that unlike the linear case, an interior point may now be optimal in the AQSPP, justifying the need for the integrality constraints \eqref{eq:aqspp:bin} for the $\mathbf{x}$-variables. 


It should be highlighted that using BQP techniques to solve the AQSPP seems a reasonable approach and has already been explored in the scientific literature \citep{buchheim2015,caprara2008}. However, we argue that as the size of the graph grows, using off-the-shelf solvers without exploiting the underlying shortest path structure of the problem may quickly become prohibitive. Memory issues that raise from building up a very heavy optimization problem can easily occur and prevent the solver from providing meaningful results. Additionally, the performance of BQP solvers in large instances can quickly deteriorate. These limitations motivate us to study this problem from an algorithmic perspective, to ultimately introduce two algorithms to solve AQSPPs. Both algorithms are described in Sections \ref{sec:aqspp} and \ref{sec:astar}.


\section{Adjacent Quadratic Dijkstra} \label{sec:aqspp}

In this section, we describe a dynamic programming algorithm for the AQSPP, referred to in this work as the \textit{adjacent quadratic Dijkstra} (\texttt{aqD}). The algorithm is based on the concept of node labels and can adequately address adjacent costs.

\subsection{The Algorithm}

The Adjacent Quadratic Dijkstra algorithm proposed in this work follows a similar structure to the standard Dijkstra's algorithm, but computing the shortest distances between the source and every possible arc in the network, as opposed to the standard Dijkstra solution method that computes the minimum costs between the source and every possible node in the network. More specifically, for a given source node $s \in V$ and an arc $a = (i, j)\in A$, we define a $s$-$a$-walk as a tuple $(v_0,v_1,\ldots, v_k)$ such that $v_0 = s, v_{k - 1} = i, v_k = j$, and let us to denote by $D_a$ the cost of the best $s$-$a$-walk. This notation allows computing adjacent costs when extending a walk that ends at arc $a$. 
We present a pseudo-code of the solution method in Algorithm \ref{alg:aqdijkstra}.
    \begin{algorithm}[htbp]
        \caption{Adjacent Quadratic Dijkstra}
        \label{alg:aqdijkstra}
        \begin{algorithmic}[1]
        
        \State \textbf{function} \textit{aqDikstra}($V, A, c, q, s$):
        \\
        \State Let $D_a\leftarrow +\infty$ for every $a\in A$
        \State Let $D_{(s,s)}\leftarrow 0$, $Q\leftarrow \{(s, s)\}$
        \\
            
        
        \While{$Q \neq \emptyset$}\label{alg:aqdijkstra:mainwhile}
          \State Let $a = (i, j)\in\arg\min\{D_a: a\in Q\}$\label{alg:aqdijkstra:select}
          \State Set $Q\leftarrow Q\setminus \{a\}$
          \For{each $k$ in neighbors of $j$}
              \If{$D_a + c_{jk} + q_{ijk} < D_{(j,k)}$}\label{alg:aqdijkstra:update}
                \State Set $D_{(j,k)} \gets D_a + c_{jk} + q_{ijk}$
                \State Set $Q\gets Q\cup\{(j, k)\}$
              \EndIf
            \EndFor
        \EndWhile
        \\
        \textbf{return} $D$
        \end{algorithmic}
    \end{algorithm}

The iteration over the arcs is controlled by a priority queue $Q$. Priority queues are abstract data types similar to stacks in which each element has a priority assigned to it. These queue structures are often used to speed up the search and are the standard on most implementations of Dijkstra's algorithm. The element with the highest priority is always positioned on top of the stack, forcing it to be the first to be removed. In this case, the labels with smallest accumulated costs should be prioritized since they indicate shorter paths and should be positioned on top of the stack.
At each iteration, the arc $a \in A$ with the smallest cumulative cost $D_a$ is removed from $Q$ and used as a reference to explore the neighboring nodes. Each neighbor $k$ is then evaluated and its associated cumulative cost updated according to an update criterion. The algorithm stops when $Q$ becomes empty.

The following results demonstrate the correctness of our method to compute optimal walks in the absence of negative costs and optimal acyclic paths if the graph is free of improving $\alpha$-cycles.

\begin{prop}
	If $c, q \geq 0$, the algorithm ends in a finite number of iterations and computes, for every arc $a = (i, j) \in A$, the cost of a minimum $s$-$a$-walk.
\end{prop}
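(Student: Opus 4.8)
\section*{Proof proposal}

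The plan is to recognize \texttt{aqD} as Dijkstra's algorithm run on an auxiliary graph whose vertices are the arcs of $G$, and then adapt the classical correctness argument. Concretely, I would introduce $G' = (A\cup\{(s,s)\},\, E')$ with an edge from $(i,j)$ to $(j,k)$ of weight $c_{jk}+q_{ijk}$ whenever $k$ is a neighbor of $j$, using the convention $q_{ssk}=0$ for the virtual source arc $(s,s)$. A direct check then shows that each $s$-$a$-walk in $G$ corresponds to a walk from $(s,s)$ to $a$ in $G'$ of the same compounded cost, so the cost of a minimum $s$-$a$-walk equals the shortest-path distance $d(a)$ from $(s,s)$ to $a$ in $G'$. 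Because $c,q\ge 0$, every weight of $G'$ is nonnegative; hence $d(a)$ is finite and attained by a simple path whenever $a$ is reachable (deleting cycles never increases cost) and equals $+\infty$ otherwise. The whole statement thus reduces to the correctness and finiteness of Dijkstra on $G'$, with one caveat: the pseudocode never marks arcs as settled and may re-enqueue an arc whose label strictly improves, so I cannot simply cite the textbook result.

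With this reduction in hand, I would prove by induction on the order of extraction the combined claim: \emph{at the moment an arc $a$ is removed from $Q$, it is being removed for the first time and $D_a = d(a)$}. The invariant that every finite $D_a$ is the cost of an actual $s$-$a$-walk, hence $D_a \ge d(a)$, is immediate from the update rule. For the inductive step I would first argue that $a$ cannot have been extracted earlier: by the hypothesis its previous extraction would already have set $D_a = d(a)$, and since labels only decrease and $d(a)$ lower-bounds them, no later strict-improvement update could fire, so $a$ would no longer lie in $Q$ — contradicting its current selection. Then, assuming $D_a > d(a)$ for contradiction, I would take a shortest $(s,s)$-$a$ path $b_0,\ldots,b_r$ in $G'$, pick the first vertex $b_\ell$ ($\ell\ge 1$) not extracted before the current iteration, and use that its predecessor $b_{\ell-1}$ was extracted with the correct value to conclude that the relaxation of the edge $(b_{\ell-1},b_\ell)$ already forced $D_{b_\ell}\le d(b_\ell)\le d(a) < D_a$; as $b_\ell$ still sits in $Q$, this contradicts $a$ being the queue minimum, giving $D_a=d(a)$.

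Finiteness then follows quickly: the first part of the induction shows each of the $|A|+1$ arcs is extracted at most once, so the main loop runs at most $|A|+1$ times, each iteration performing a bounded number of relaxations. A companion propagation argument along shortest paths in $G'$ shows that every arc reachable from $s$ is eventually inserted into $Q$ and therefore extracted, so that at termination $D_a=d(a)$ for every arc, with $D_a=+\infty$ exactly on the unreachable ones.

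The step I expect to be the main obstacle is precisely the no-re-extraction claim and its interplay with optimality: because the algorithm may re-enqueue arcs, termination and correctness have to be established \emph{simultaneously} inside a single induction rather than in sequence. This is exactly where the hypothesis $c,q\ge 0$ is indispensable — it is what guarantees that a label, once equal to $d(a)$, can never be strictly improved (closing the re-extraction loophole), that shortest walks coincide with finite simple paths, and that the prefix distances $d(b_\ell)$ along an optimal path are monotone and bounded by $d(a)$. The remaining bookkeeping, namely the walk-to-path correspondence and the cost identity, is routine once the convention $q_{ssk}=0$ for the virtual arc is fixed.
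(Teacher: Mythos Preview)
Your proposal is correct and close in spirit to the paper's proof, but it is organized differently in two respects worth noting. First, you make the reduction to ordinary Dijkstra explicit by introducing the auxiliary line-graph $G'$ whose vertices are the arcs of $G$ (plus the virtual source $(s,s)$); the paper instead argues directly with $s$-$a$-walks in $G$ and never names this graph. Your framing buys a cleaner conceptual picture and lets you invoke standard intuitions about shortest simple paths versus walks in $G'$; the paper's direct argument avoids the bookkeeping of setting up the bijection and checking the cost identity. Second, you fold termination and optimality into a single induction, deducing no-re-extraction from the inductive hypothesis that a previous extraction would already have set $D_a=d(a)$ (hence no strict improvement is possible). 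The paper instead proves no-re-extraction first and separately, via the monotonicity of extracted labels: at the moment $a^*$ is extracted it has minimal $D$-value in $Q$, and since all later relaxations add nonnegative increments to labels that are themselves at least $D_{a^*}$, the label $D_{a^*}$ can never strictly decrease afterwards. Only then does the paper run the optimality induction, using the same ``first unsettled arc on a cheaper walk'' contradiction you sketch. Both orderings are standard; yours is slightly more economical, while the paper's separation makes the role of $c,q\ge 0$ in termination more transparent.
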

\begin{proof}
	The number of iterations of the algorithm is governed by the times an arc $a\in\arg\min\{D_a: a\in Q\}$ is selected at Line \ref{alg:aqdijkstra:select}. We denote $a_t$ the arc selected at this line at iteration $t$ of the algorithm. Let us define $R_0 = \emptyset$ and for every $t \geq 1$, $R_t \leftarrow R_{t - 1}\cup \{a_t\}$. With these definitions, we have that $a_1 = (s, s), R_1 = \{(s, s)\}$.
	
	We will first demonstrate that, for every $t\geq 1$, $a_t\notin R_{t - 1}$. The first time an arc $a_t = a^*$ is added to $R_t$, it satisfies $D_{a^*}\leq D_a$ for every $a\in Q$. Since the components of $c, q$ are non-negative by assumption, it follows that any extension of any walk can only increase its cost. By the time another walk is extended using the arc $a^*$, its cost can only be $\geq D_{a^*}$. This means the cost of $D_{a^*}$ will never be updated, and therefore $a^*$ will never enter the priority queue $(Q)$ again.
	
	We will now prove that, at iteration $t$, $D_{a_t}$ is the cost of the shortest $s$-$a_t$-walk. The result is obviously true for $t = 1$. Let us assume by induction that the result holds for $t$, and let us assume by contradiction that there exists a walk $w = (w_1 = s, \ldots, w_{l - 1} = i_{t + 1}, w_l = j_{t + 1})$ such that $c(w) < D_{a_{t + 1}}$, where $a_{t + 1} = (i_{t +1}, j_{t + 1})$. Let $k$ be the first index in this walk such that $(w_k, w_{k + 1})\notin R_t$ (this is well defined since $a_{t + 1}$ satisfies this property). Let $pw_k = (w_1 = s, \ldots, w_k)$ be the $k$-th prefix of walk $w$. Since the components of $c, q$ are non-negative by assumption, it follows that $c(pw_k) + c_{w_k w_{k + 1}} + q_{w_{k - 1}w_k w_{k + 1}}\leq c(w)$. Also, because $(w_{k - 1}, w_k)\in R_t$, the induction hypothesis implies that $D_{w_{k - 1}w_k}\leq c(pw_k)$. Now, because $w_{k + 1}$ is adjacent to $(w_{k - 1}, w_k)$ in $w$, it follows from the labeling algorithm that $D_{w_k w_{k + 1}}\leq D_{w_{k - 1} w_k} + c_{w_kw_{k + 1}} + q_{w_{k - 1}w_k w_{k + 1}}$. Finally, because $a_{t +1}$ was picked at iteration $t + 1$ of the algorithm and because $(w_{k}, w_{k +1})\notin R_t$, it follows that $D_{a_{t + 1}}\leq D_{w_k w_{k + 1}}$. We put all these inequalities together to obtain:
	\begin{align*}
		D_{a_{t + 1}}&\leq D_{w_k w_{k + 1}}\\
					&\leq D_{w_{k - 1}w_k} + c_{w_k w_{k + 1}} + q_{w_{k - 1}w_k w_{k + 1}}\\
					&\leq c(pw_k) + c_{w_k w_{k + 1}} + q_{w_{k - 1}w_k w_{k + 1}}\\
					&\leq c(w)\\
					&< D_{a_{t + 1}},
	\end{align*}
	which is obviously a contradiction, proving that $D_{a_{t + 1}}$ is the minimum cost among all $s$-$a_{t + 1}$-walks.$\hfill\square$
\end{proof}

\begin{prop}
	In the absence of improving $\alpha$-cycles, $\min\{D_{a}: a\in \delta^-(t)\}$ as computed by Algorithm \ref{alg:aqdijkstra} corresponds to the minimum cost of a $s$-$t$-path in $G$.
\end{prop}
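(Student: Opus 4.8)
The plan is to lean on Proposition 1 and then argue that cycles can be excised from an optimal walk without raising its cost. By Proposition 1, for each $a = (i, t)\in\delta^-(t)$ the label $D_a$ is the cost of a minimum $s$-$a$-walk, and since every $s$-$t$-walk finishes with some arc of $\delta^-(t)$, the value $\min\{D_a : a\in\delta^-(t)\}$ equals the minimum cost over all $s$-$t$-walks. Because every $s$-$t$-path is in particular an $s$-$t$-walk, this already yields that $\min\{D_a : a\in\delta^-(t)\}$ is at most the minimum cost of an $s$-$t$-path. All the content therefore lies in the reverse inequality, which I would establish by taking an optimal $s$-$t$-walk (which exists by Proposition 1) and rewriting it as an $s$-$t$-path of no greater cost.

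First I would normalize an optimal walk $w = (w_0 = s,\ldots, w_m = t)$ so that $s$ occurs only at the start and $t$ only at the end: truncating $w$ at its first visit to $t$ deletes a suffix, and restarting it at its last visit to $s$ deletes a prefix. Each operation removes only arc costs, interior quadratic costs, and a single boundary quadratic term, all of which are nonnegative by hypothesis; hence neither truncation increases the cost. After this step every repeated vertex of $w$ is internal, that is, distinct from both $s$ and $t$.

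While $w$ is not a path, I would select a repeated internal vertex $v = w_i = w_j$ with $i < j$. Since $v\neq s$ we have $i\geq 1$, so $w_{i-1}$ exists, and since $v\neq t = w_m$ the occurrence at $j$ is not the final vertex, so $w_{j+1}$ exists. Assuming $G$ has no self-loops (so that $j - i\geq 2$), the sub-walk $(w_{i-1}, w_i,\ldots, w_j, w_{j+1})$ matches Definition 1 with $n = j - i + 3\geq 5$ and $u_2 = w_i = w_j = u_{n-1}$, so it is an $\alpha$-cycle. I would replace it by the shortcut $(w_{i-1}, w_i, w_{j+1})$, which is legitimate because the arc $(w_i, w_{j+1}) = (w_j, w_{j+1})$ belongs to $A$, and which strictly decreases the number of arcs. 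Note that the normalization guarantees this loop never needs to be reopened for $s$ or $t$, since the shortcut only deletes internal vertices.

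The step I expect to demand the most care is the cost accounting for this replacement. The claim is that it alters the total walk cost by exactly the difference between the cost of the $\alpha$-cycle and the cost of its shortcut $(w_{i-1}, w_i, w_{j+1})$. The reason is that the quadratic terms straddling the modified region, $q(w_{i-2}, w_{i-1}, w_i)$ on the left and $q(w_j, w_{j+1}, w_{j+2})$ on the right, are left untouched, or are absent in both walks: the left term involves only the retained arc $(w_{i-1}, w_i)$, and the right term equals $q(w_i, w_{j+1}, w_{j+2})$ after the shortcut because $w_i = w_j$. All other differences are internal to the $\alpha$-cycle, so the absence of improving $\alpha$-cycles guarantees that the shortcut costs no more than the cycle, and the rewritten walk therefore costs no more than $w$. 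Since each replacement strictly reduces the arc count, after finitely many steps $w$ becomes an $s$-$t$-path $P$ with $c(P)\leq \min\{D_a : a\in\delta^-(t)\}$. Combined with the inequality of the first paragraph, this forces $\min\{D_a : a\in\delta^-(t)\}$ to equal the minimum cost of an $s$-$t$-path, as claimed.
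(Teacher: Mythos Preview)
Your proposal is correct and follows essentially the same approach as the paper: start from an optimal $s$-$t$-walk (provided by Proposition~1) and iteratively excise repeated vertices by replacing each $\alpha$-cycle $(w_{i-1},\ldots,w_{j+1})$ by its shortcut $(w_{i-1},w_i,w_{j+1})$, using the non-improving hypothesis to control the cost. Your version is in fact more careful than the paper's, which tacitly assumes any repetition occurs at an interior index $1<l<k<n$; your normalization step (truncating at the first visit to $t$ and restarting at the last visit to $s$) and your explicit accounting for the boundary quadratic terms $q(w_{i-2},w_{i-1},w_i)$ and $q(w_j,w_{j+1},w_{j+2})=q(w_i,w_{j+1},w_{j+2})$ fill gaps that the paper glosses over.
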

\begin{proof}
	Let us assume by contradiction that, for a given arc $a = (u, t)$, $D_a$ corresponds to the cost of a $s$-$a$-walk with a cycle and that no $s$-$a$-path exists with the same cost. This walk can be written as $(w_i)_{i = 1}^n$ with $w_1 = s,w_{n - 1} = u, w_n = t$ and $(w_i, w_{i + 1})\in A$ for every $i = 1\ldots n - 1$. If $w_l = w_{k}$ for $1 < l < k < n$, one can replace the $\alpha$-cycle $(w_i)_{i = l - 1}^{k + 1}$ by $(w_{l - 1}, w_l, w_{k + 1})$ to obtain a walk with one less cycle, at the same cost or less (because the $\alpha$-cycle is non-improving). One can repeat this argument as many times as necessary to ultimately obtain a $s$-$a$-path of the same cost or less, which contradicts our assumption. It follows that $D_a$ corresponds to the cost of a $s$-$a$-path.$\hfill\square$
\end{proof}	

These two results characterize a nice feature of our method. One does not need to incur into an expensive procedure to check for the existence of improving $\alpha$-cycles. Instead, one can check for the non-negativity of the graph weights and execute the \texttt{aqD} regardless of the existence of $\alpha$-cycles. One can \textit{a posteriori} check for the existence of improving $\alpha$-cycles once the algorithm has computed an optimal $s$-$t$-walk.

\subsection{Complexity of \texttt{aqD}}

The number of neighboring arcs that are inspected in Line \ref{alg:aqdijkstra:update} is bounded above by the number of quadratic arcs of the form $(i, j, k)$ with $\big((i, j), (j, k) \big) \in A \times A$, that we denote $|A_Q|$. Removing the arc of highest priority from $Q$ and rearranging its elements to maintain its structure can be done in $\mathcal{O}(\log|A|)$ amortized time if the priority queue is implemented using a heap. The total number of iterations (governed by the main loop in Line \ref{alg:aqdijkstra:mainwhile} of the pseudo-code) is $\mathcal{O}(|A|)$. This leads to a total complexity of $\mathcal{O}(|A|\log|A| + |A_Q|)$. For dense graphs (those for which $\mathcal{O}(|A_Q|) = \mathcal{O}(V^3)$ and $\mathcal{O}(|A|)= \mathcal{O}(V^2)$), the time complexity is of $\mathcal{O}(V^3)$. On sparse graphs (those for which $\mathcal{O}(|A|) = \mathcal{O}(|A_Q|) = \mathcal{O}(|V|)$), the time complexity is reduced to $\mathcal{O}(|V|\log|V|)$. For comparison, the standard Dijkstra's method for linear shortest paths runs in $\mathcal{O}((|A| + |V|)\log|V|)$ time using a heap, which reduces to $\mathcal{O}(|V|^2\log|V|)$ for dense graphs, and to $\mathcal{O}(|V|\log|V|)$ for sparse ones. Hence our algorithm shares the same worst-case complexity as the standard Dijkstra's method on sparse graphs, but is slower on dense graphs.


\section{A\textsuperscript{*} with backward search for cost-to-go estimation} \label{sec:astar}



In this section, we introduce a bidirectional labeling algorithm to increase the search speed. This scheme consists in solving the problem in two steps: (i) a backward search using a (linear) SPP, starting from the target node $t \in V$, to find cost-to-go estimates; (ii) a forward search for the original AQSPP, starting from the source node $s \in V$, but considering the cost-to-go estimates as lower bounds to guide the search. This approach is inspired by the work of \cite{thomas2019}, where the authors extend the seminal work of \cite{hart1968} to develop a bidirectional search for resource-constrained shortest path problems. The idea is built upon Hart et al.'s demonstration that A\textsuperscript{*} only terminates with an optimal path when the estimates of the cost-to-go are actual lower bounds. In the context of this work, for the case of a quadratic shortest path search, linear estimators of the costs-to-go are actual lower bounds for the quadratic problem provided that the quadratic terms are reduced --- or as in this case, ignored. The fundamental idea is that these linear estimations can be calculated through a backward search from the target node $t \in V$ to all other nodes by solving the standard (linear) SPP.


Structurally, the algorithm starts with a backward search for the linear SPP where the quadratic costs are neglected. Starting from the sink node $t \in V$, one finds, for every node $u \in V$, the shortest distance from $u$ to $t$ using the linear costs $\boldsymbol{c}$. We denote $(B_u)_{u \in V}$ the costs computed using this procedure. Note that the algorithm not only is fast even for dense graphs (it relies on solving the standard SPP using Dijkstra's method) but also occupies only linear space to store the information. Note that this method also provides an upper bound for the AQSPP by evaluating the $s$-$t$-path solution provided by considering the quadratic costs.


In the second step, we perform a forward search using the proposed \texttt{aqD} method but taking into account the cost-to-go estimators to guide the search. More specifically, for every arc $a = (i, j) \in A$, $D_a + B_j$ represents a lower bound on the cost that would be achieved by extending the current $s$-$a$-walk to $t \in V$. These values are then used to select the next arc to process in the \texttt{aqD} method. By estimating the cost of arriving at $t \in V$, we can significantly reduce the size of the search tree and speed up the procedure. However, it is important to be aware that these estimates must always correspond to lower bounds on the costs-to-go to guarantee the correctness of the method. 

The proposed method is presented on Algorithm \ref{alg:bidirectional}.

\begin{spacing}{1.5}
    \begin{algorithm}[htbp]
        \caption{Adjacent Quadratic A\textsuperscript{*} with backward cost-to-go estimation}
        \label{alg:bidirectional}
        \begin{algorithmic}[1]
        
        \State \textbf{function} \textit{aqA\textsuperscript{*}}($G$,$c$,$q$,$s$,$t$):
        \\
        \State Let $(B_u)_{u\in V} \gets Dijkstra(G,c, t)$
        \State Let $D_a\leftarrow +\infty$ for every $a\in A$
        \State Let $D_{(s,s)}\leftarrow 0$, $Q\leftarrow \{(s, s)\}$
        \\
        
        \While{$Q \neq \emptyset$}\label{alg:aqastar:mainwhile}
        \State Let $a = (i, j)\in\arg\min\{D_a + B_j: a = (i, j)\in Q\}$\label{alg:aqaqastar:select}
        \State Set $Q\leftarrow Q\setminus \{a\}$
        \For{each $k$ in neighbors of $j$}
        \If{$D_a + c_{jk} + q_{ijk} < D_{(j,k)}$}\label{alg:aqaqastar:update}
        \State Set $D_{(j,k)} \gets D_a + c_{jk} + q_{ijk}$
        \State Set $Q\gets Q\cup\{(j, k)\}$
        \EndIf
        \EndFor
        \EndWhile
        \\
        \textbf{return} $D$

        \end{algorithmic}
    \end{algorithm}
\end{spacing}

\section{Solving very-large scale AQSPPs in practice}
\label{sec:on-the-fly}

On both proposed algorithms --- \texttt{aqD} and \texttt{aqA\textsuperscript{*}} ---, the adjacent quadratic cost matrix $\boldsymbol{q}$ are assumed to be pre-calculated and allocated on some existing data structure. Although having such information allocated in memory improves the search speed of the algorithm, it can become impractical as the problems increase in size. For instance, when dealing with very-large-scale road networks, the graphs can easily achieve dimensions of millions of nodes, tens of millions of arcs, and hundreds of millions of quadratic arcs. Thus, storing this information in memory becomes impractical.

In this section, we propose a follow-on improvement to \texttt{aqD} and \texttt{aqA\textsuperscript{*}}, which embeds the assessment of the adjacent quadratic costs within the algorithms (on the fly). This is possible whenever there exists a function $\Gamma : V^3 \rightarrow R^+$ able to compute quadratic costs for every three indices $(i,j,k)$. We will refer to these modified algorithms as \texttt{aqD+} and \texttt{aqA\textsuperscript{*}+}.

Several applications could benefit from this improvement and could build a function like $\Gamma$. The problem of spatially defining a route, for example, often requires path search on massive instances. Those problems usually use three-dimensional spatial data, which can be converted into extensive graphs used to define the routes. Additionally, attributes such as slope, angle, and others can be used as input for $\Gamma$ due to their psychical nature. Large infrastructure projects, such as highways, electric transmission lines, and gas pipelines rely on this framework.


\section{Computational Experiments} \label{sec:experiments}

In this section, we focus on evaluating the performance of the proposed algorithms, presenting results from a set of computational experiments. We explore random graph instances generated with two different methods and observe their effects on the search behavior. Then, we analyze the algorithms' sensitivity to variations of the quadratic cost. On the third test, we compare the algorithms' performance on real grid instances with different sizes. Finally, the last experiment perform a stress test, where we increase instances sizes up to $16 \times 10^6$ nodes, $64 \times 10^6$ arcs, and $10^9$ quadratic arcs.

The computational analyses evaluate the total search time required by the proposed algorithms against the method presented in \cite{rostami2015} and reviewed in \cite{hu2018} that solves AQSPPs for directed acyclic graphs. This algorithm re-writes the original graph according to a linearization of adjacent quadratic costs. Thus, the optimal path can be found on an extended graph through standard SPP algorithms. We will refer to this benchmark as \texttt{Lin} in the following sections. The results from solving the BQP using off-the-shelf solvers are not presented due to the solver's inability to handle even the smallest instance tested in our experiments.

\subsection{Software and Machine Settings}

The Julia programming language is used throughout this work. The package \texttt{Graphs.jl} \citep{graphs-2021} is chosen to handle graph data structures, as it provides efficient data structures to handle graph objects and mature methods to solve shortest path problems. It should be highlighted that our implementation of the \texttt{aqD} algorithm is made on top of Dijkstra's algorithm implementation provided by this package.

The R programming language was also used in this paper, mainly for Geographic Information System (GIS) purposes. The packages \texttt{rgdal} and \texttt{rgeos} were used for manipulation of geographical data that was further converted into the graphs used for the computational experiments and the application study. All source code used in this work is publicly available for further verification, validation, and improvements\footnote{The code will be made publicly available as a GitHub repository upon acceptance}. Finally, computational calculations were made on an Intel\textsuperscript{\textregistered} Core i7-10700K 4.8GHz with 64 GB of RAM machine, using one single thread.

\subsection{Random graphs}

In the first experiment, we study the algorithms' performance considering randomly generated graph instances. We aim to analyze with this experiment how does random graphs topology impacts the search process. The methods used to build the instances are known as Erdős-Rényi \citep{erdos-renyi1959,Bollobas2001} and Configuration Model \citep{newman2010}, which have been widely used in scientific literature. More specifically, the Erdős-Rényi methods defines a graph where the arcs are added between pairs of nodes according to a given probability $p \in [0,1]$. The greater $p$ is, greater is the number of arcs, thus the denser the graph is. For expository purposes, in this model, we define $p$ as $0.8$ to obtain highly connected instances and range the number of nodes $|V| \in \{100,150,200,250,300,350,400\}$. For each configuration $(|V|,p)$, 100 random instances are generated. Costs for arcs and quadratic arcs are also randomly generated using a standard Uniform distribution, i.e., $\big(q(i,j,k), c\big((i,j)\big)\big) \sim Unif(0,1)$.

Figure \ref{fig:random-erdos} presents compiled results from this experiment, showcasing the minimum, average, and maximum search times for each algorithm and size tested. One can notice a clear variation in search time for each of the tested algorithms. Results for \texttt{aqA\textsuperscript{*}} and \texttt{aqD} show reduced and similar search-times throughout the instances. This behavior can be explained by the high connectivity of the Erdős-Rényi instances considered in this experiment. Since $p$ was set to $0.8$, there is a high probability that the source node is close-connected to the target node, thus the shortest path contains only a few nodes. Consequently, the number of nodes needed to be visited becomes small, shortening the search process.

\begin{figure}[htbp]
	\centering
	\includegraphics[width=\textwidth]{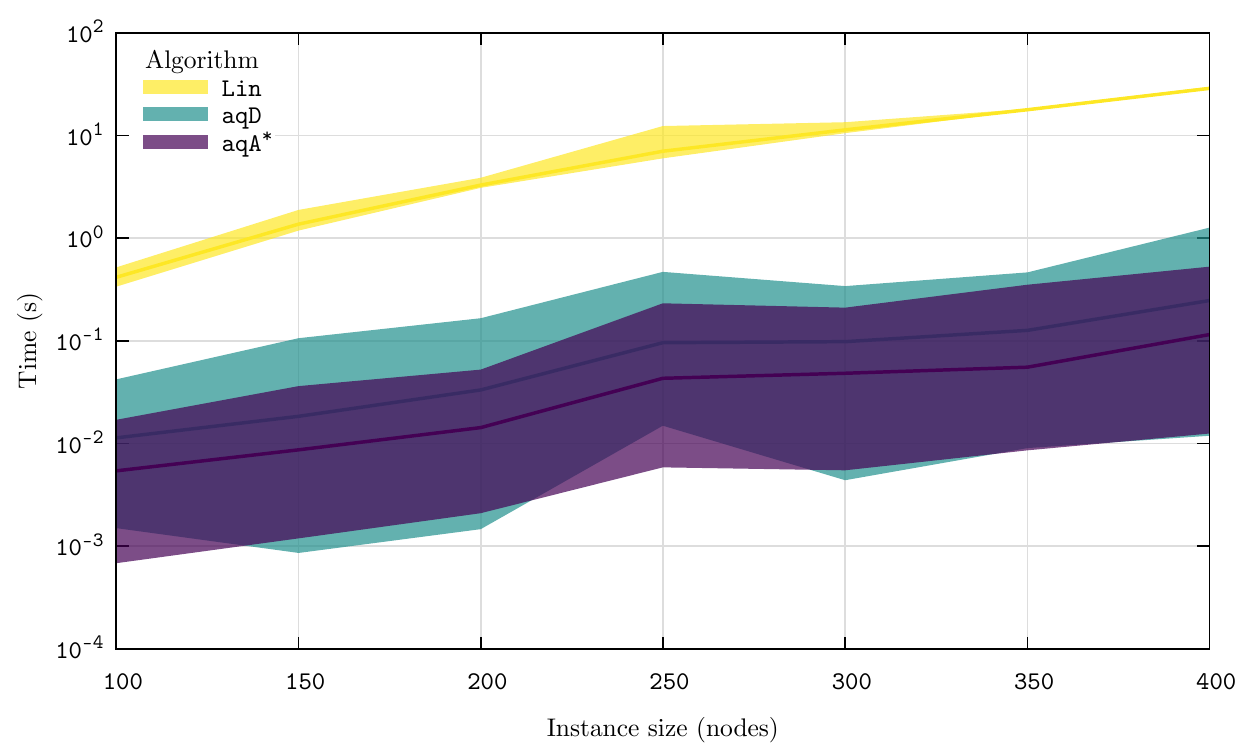}
	\caption{Results for Erdős-Rényi instances}
	\label{fig:random-erdos}
	\hfill
\end{figure}

Furthermore, we highlight the difference in the search-time magnitude for the benchmark \texttt{Lin} approach, in comparison to \texttt{aqA\textsuperscript{*}} and \texttt{aqD}. This difference is primarily due to the time spent building the linearized graph. Since the number of quadratic arcs is large, performing the linearization is very time-consuming and becomes a drawback to this approach.

The Configuration Model generates a random graph $G$ from a given degree sequence $\boldsymbol{\kappa}=\{\kappa_{i}\}_{i = 1}^{|V|}$. A degree defines the number of connections a given node has, therefore, as $\boldsymbol{\kappa}$ increases, so does the number of arcs in $G$. To build the graph, the method randomly samples $\kappa_{i}$ nodes from $G$ and connects them to node $i \in V$. For this model, we fix the nodes' degrees to $8$ and vary the number of nodes $|V| \in \{150\times10^3,200\times10^3,250\times10^3,300\times10^3\}$. Similarly, 100 random instances are generated with $c\big((i,j)\big)$ and $q(i,j,k)$ drawn from an $Unif(0,1)$. Results from the Configuration Model, presented in Figure \ref{fig:random-conf-model}, suggest that the speed ups remain nearly constant, with our method \texttt{aqA\textsuperscript{*}} performing around 32 times faster on average than \texttt{Lin}. Additionally, \texttt{aqD} is about 5 times faster than \texttt{Lin}.

\begin{figure}[htbp]
	\centering
	\includegraphics[width=\textwidth]{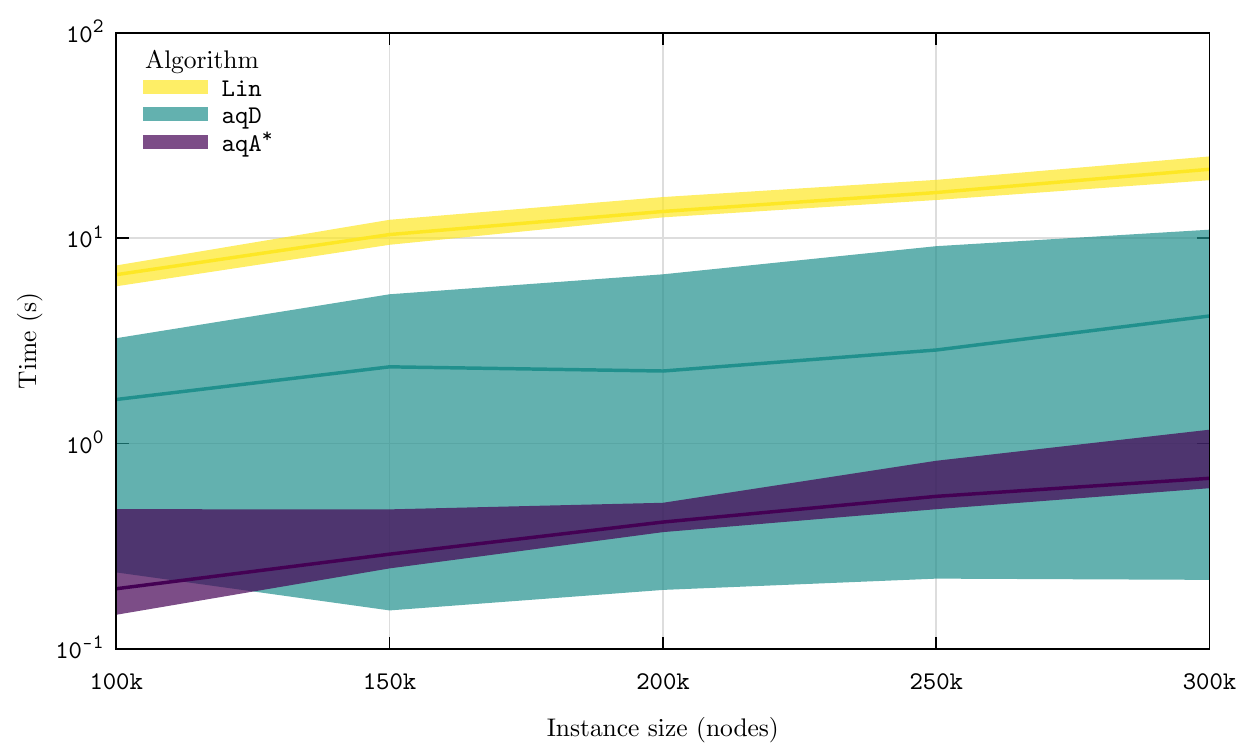}
	\caption{Results for Configuration Model instances}
	\label{fig:random-conf-model}
	\hfill
\end{figure}

Furthermore, the presented results indicate that the \texttt{aqA\textsuperscript{*}} exhibits less variance than \texttt{aqD}, resulting in a more consistent and predictable search time for instances generated with the Configuration Model. These results reaffirm the role of the backward step on providing reasonable cost-to-go estimates and consistently speeding up the search process. 

\subsection{Quadratic Cost Variation}

The second experiment measures the sensitivity of the algorithms to the magnitude of quadratic costs. A parameter $\lambda \in \{1, 5, 50, 500, 1000\}$ is set to scale the quadratic costs, so that the arc interaction would be $\lambda q(i,j,k)$. We test $100$ instances generated with the Erdős-Rényi method with $400$ nodes and $100$ instances generated with the Configuration Model with $300 \times 10^3$ nodes for each $\lambda$ value. Figures \ref{fig:lambda-erdos} and \ref{fig:lambda-conf-model} present total search time for both graphs, and for each $\lambda$ value considered. The presented results suggest that \texttt{aqA\textsuperscript{*}}, \texttt{aqD} and \texttt{Lin} are not significantly influenced by an increase of quadratic costs. We also do not observe any apparent trends or unconventional behaviour on the plots, confirming the insensitivity towards quadratic cost variation.

\begin{figure}[htbp]
	\centering
	\includegraphics[width=\textwidth]{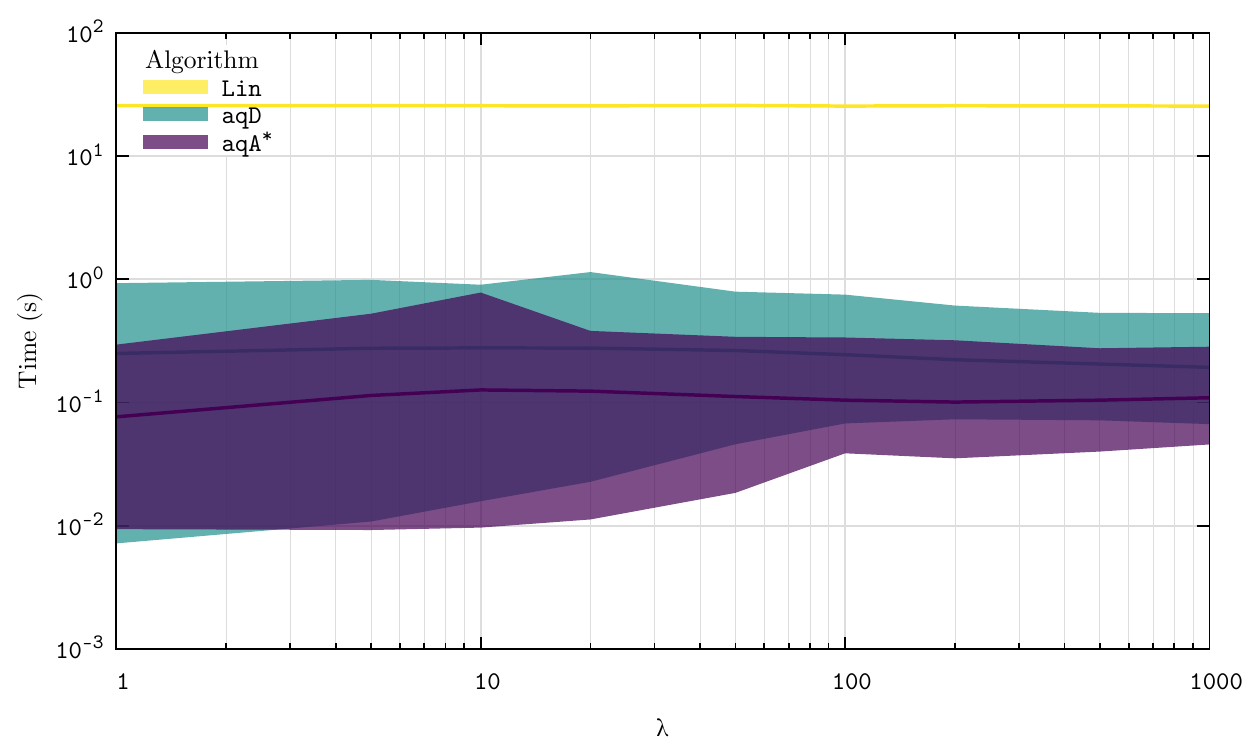}
	\caption{Results for Erdős-Rényi instances with $\lambda$ variation}
	\label{fig:lambda-erdos}
	\hfill
\end{figure}

\begin{figure}[htbp]
	\centering
	\includegraphics[width=\textwidth]{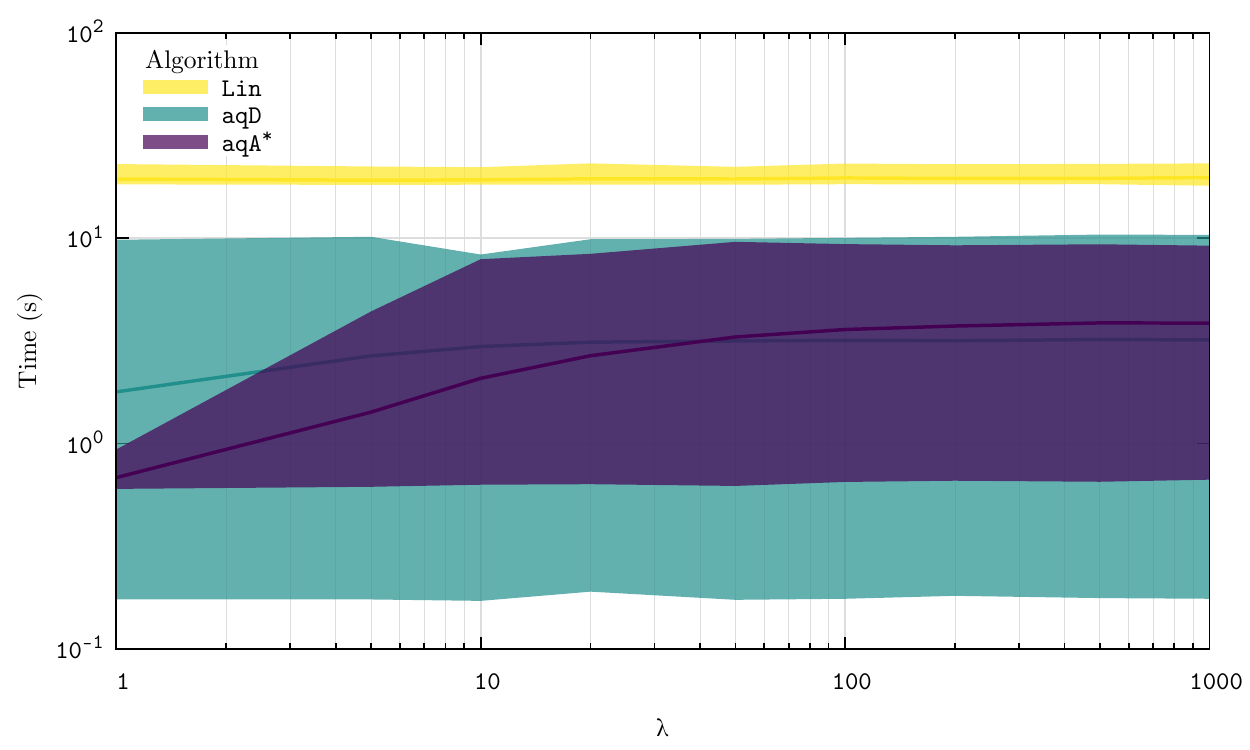}
	\caption{Results for Configuration Model instances with $\lambda$ variation}
	\label{fig:lambda-conf-model}
	\hfill
\end{figure}

These results highlight the solution capability of \texttt{aqA\textsuperscript{*}} since it uses a cost-to-go estimator that we would expect to lose relevance as quadratic costs increase. Nevertheless, the $\lambda$ scale factors chosen for these experiments do not reverse the costs-to-go into a poor estimator. In other words, even with quadratic costs scaled up by a factor of $\lambda = 1,000$, the estimator still provides valuable guidance for the search process. This behavior highlights the robustness of the proposed algorithm.

\subsection{Grid Graphs}
\label{sec:grid-graph}

For the following experiments, we consider actual grid graph instances of spatial nature. Each node is associated with an actual location on the globe, and the arc costs represent the distances between locations. The graphs are built out of a regularly spaced grid of elevation points, which we define on \eqref{eq:y-matrix} as the matrix $Y$ with $N$ rows and $M$ columns. Each element of the matrix is associated with a numeric value and spatial coordinates.

\begin{equation}
    \centering
        Y=\begin{bmatrix}
        y_{11} & \dots  & y_{1M} \\
        \vdots & \ddots & \vdots \\
        y_{N1} & \dots  & y_{NM} \\
    \end{bmatrix}
    \label{eq:y-matrix}
\end{equation}

To create a graph instance, we convert the matrix $Y$ into a weighted graph, where each node $i \in V$ is equivalent to the element $y_{nm}$ with $|V|$ the product of matrix dimensions $N$×$M$, and $|A|$ can range according to a graph composition rule. This rule is a function of how many neighbors $n$ each node connects to. Figure \ref{fig:node-neighborhood} presents the effect of choosing different number of neighbors on $|A|$.

\begin{figure}[htbp]
	\centering
	\includegraphics[width=0.5\textwidth]{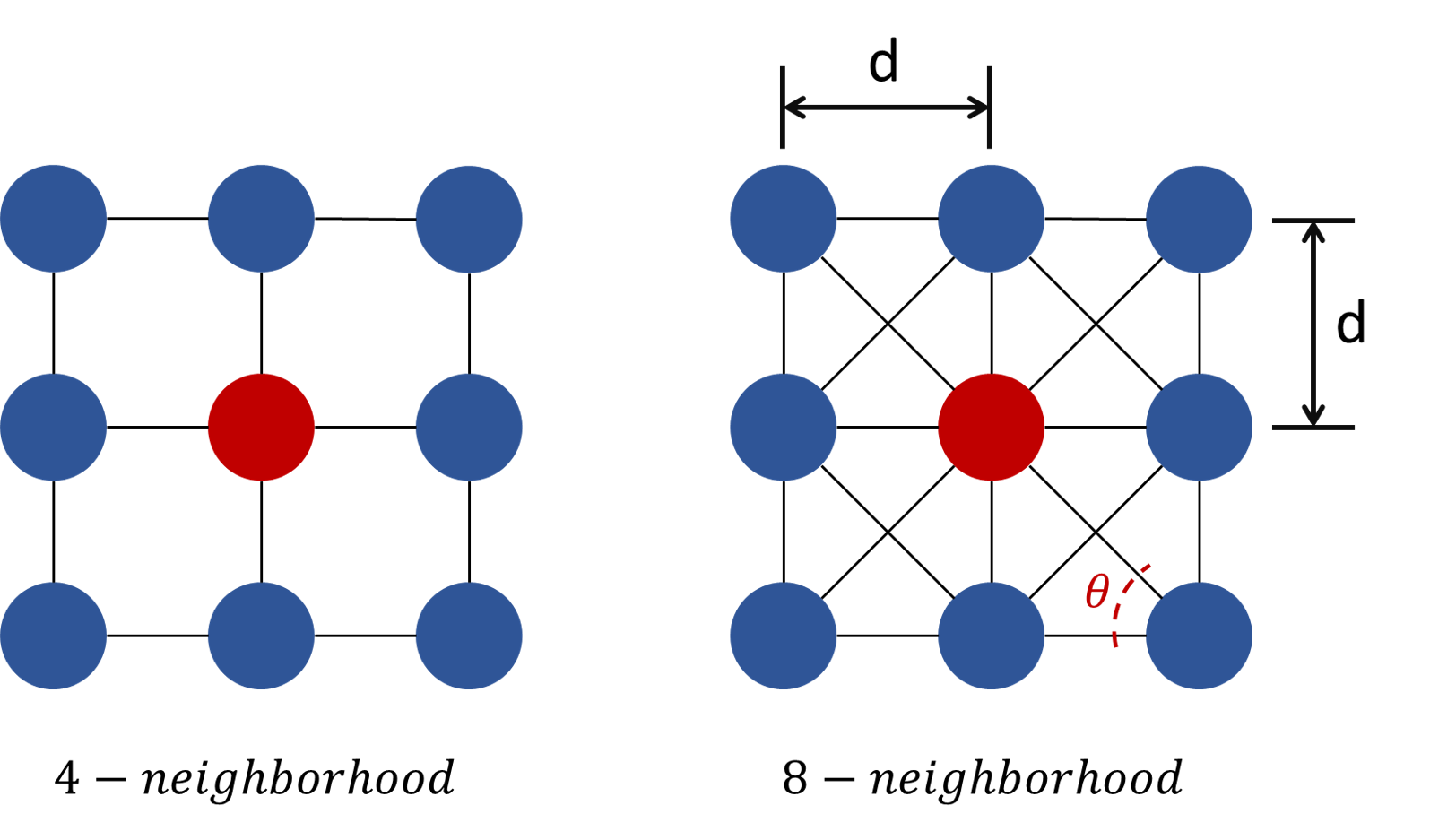}
	\caption{Illustration of node neighborhood for $n=8$ and $n=16$}
	\label{fig:node-neighborhood}
\end{figure}

The intuition behind node neighborhood comes from direction and length of movement. Common choices for $n$ are $\{2,4,8\}$. For instance, choosing $n = 2$ or $n = 4$ represent unitary movements on two directions (east and south) or on four directions (west/east and north/south), respectively. On the other hand, for $n = 8$, diagonal movements are contemplated.

We denote by $d(y_{nm},y_{op})$ as a function that calculates the distance cost between points $y_{nm}$ and $y_{op}$. Thus, arc linear costs are defined as $\{c\big((i,j)\big) = d(y_{nm},y_{op}) \quad | \quad i = (n,m) \quad \text{and} \quad j=(o,p)\}$, where $(n,m)$ and $(o,p)$ are neighbors according to a given graph composition rule. For expository purposes, in this work, we set $n = 8$ so that diagonal movements are allowed. As we are working with spatial information, diagonal costs must be adjusted by a factor of $\sqrt{2}$ following standard trigonometric developments. Figure \ref{fig:benchmark-tests-1} presents the results for all considered algorithms.

\begin{figure}[htbp]
	\centering
	\includegraphics[width=\textwidth]{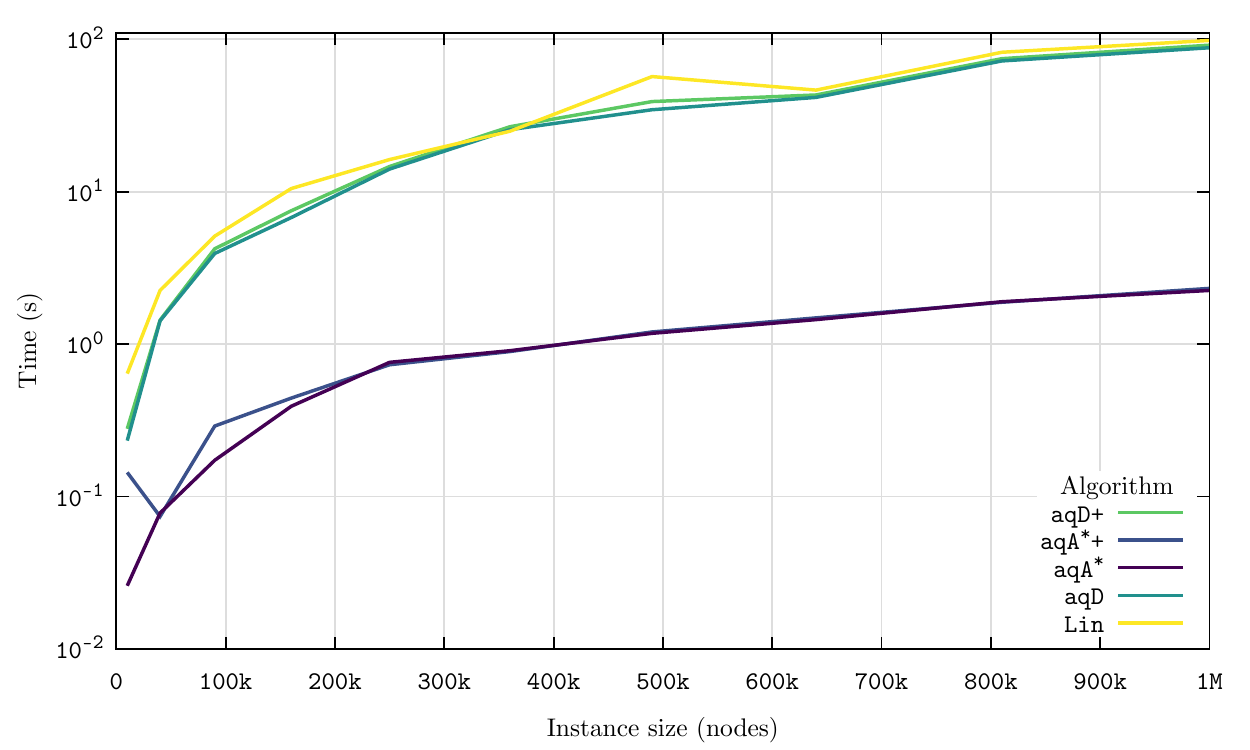}
	\caption{Grid Graphs: Search Time Comparison}
	\label{fig:benchmark-tests-1}
\end{figure}

Numerical results depicted in Figure \ref{fig:benchmark-tests-1} present the \texttt{aqA\textsuperscript{*}} as the fastest algorithm among all AQSPP algorithms considered. The \texttt{aqA\textsuperscript{*}} algorithm is approximately 39 times faster than \texttt{aqD} and roughly 53 times faster than \texttt{Lin} approach on average. Also, it seems the ratios fluctuate around this average value, which suggest a constant relation. A possible explanation for this over-performance is related to the similarity of paths obtained from SPP and AQSPP algorithms, resulting in exact cost-to-go estimators, thus speeding up \texttt{aqA\textsuperscript{*}}. \texttt{Lin}'s method, on the other hand, is outperformed by all the algorithms proposed in this work. The results displayed in Figure \ref{fig:benchmark-tests-1} show an \texttt{aqD} improvement in speed ranging between 10\% up to a 50\% in comparison to the \texttt{Lin} approach. Further analyses revealed that the time spent to linearize the graph is very significant and becomes a downside to the \texttt{Lin} approach, especially as instances grow in size. Figure \ref{fig:benchmark-tests-1} helps us notice the similarity between the original and modified versions of our algorithms, for both \texttt{aqD} and \texttt{aqA\textsuperscript{*}}. Although calculating the quadratic costs on the run lowers the search speed, the impact is negligible. Since the modified versions benefit from memory allocation reductions and allow solving larger instances, we highly recommend their use for applications where adaptation is a viable resource. 

\subsection{Stress test}

We also provide a final experiment designed to stress search capabilities on extremely large-scale instances, ranging from $144\times10^6$ to $10^9$ quadratic arcs. Since real applications may require path search on these instance sizes, evaluating the algorithms performance over increasingly larger graph instances can provide meaningful insights. Figure \ref{fig:stress} displays the results from these experiments. It presents the search time for all approaches considered in this work using the extremely large-scale grid graph instances, generated following the same strategy as in Section \ref{sec:grid-graph}.

\begin{figure}[htbp]
	\centering
	\includegraphics[width=\textwidth]{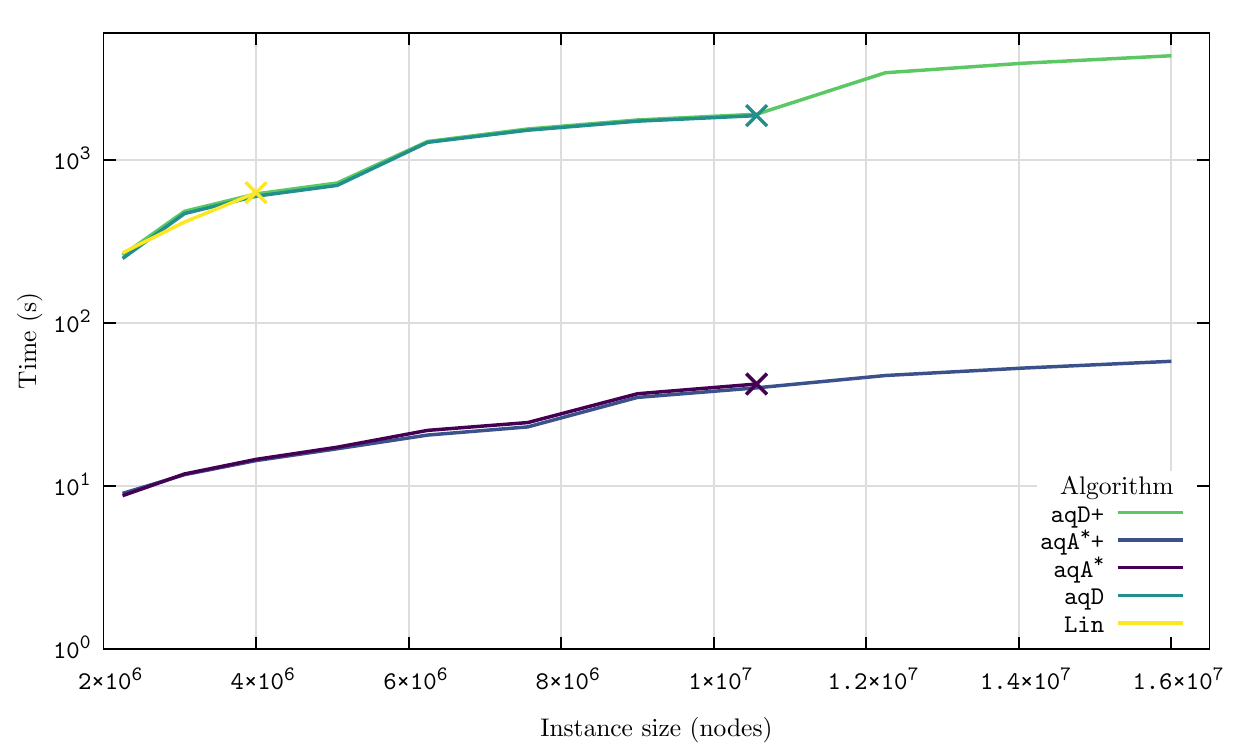}
	\caption{Stress Analysis: Search-Time Comparison}
	\label{fig:stress}
	\hfill
\end{figure}

The results show that none of the original algorithms is able to solve all instances due to memory issues. Since the methods assume a preexisting structure that maps adjacent arcs into quadratic costs, storing this data become challenging as instances grow. Furthermore, \texttt{aqD} and \texttt{aqA\textsuperscript{*}} have internal data structures to store relevant information throughout the search process, which also requires memory space. The \texttt{Lin} approach presented an even higher demand for memory allocation since, by construction, there is a need to store the linearized graph.
The marker on each approach in Figure \ref{fig:stress} displays the maximum instance size each approach is able to solve. The large instance \texttt{Lin} is able to solve has $256 \times 10^6$ quadratic nodes. On the other hand, \texttt{aqD} and \texttt{aqA\textsuperscript{*}} algorithms have similar extensions, solving instances up to $676\times10^6$ quadratic nodes, but \texttt{aqA\textsuperscript{*}} algorithm obtains the same solution in $2.26\%$ of the \texttt{aqD} computing time.
In contrast, the modified versions of \texttt{aqD} and \texttt{aqA\textsuperscript{*}} are able to solve all instances tested. Since quadratic calculations are made throughout the search, these algorithms do not require significant memory space, allowing them to handle increasingly larger instances properly. As expected, \texttt{aqA\textsuperscript{*}+} remains the faster approach, solving the AQSPP in roughly 60.0 seconds on a graph with $16\times10^6$ nodes, $64\times10^6$ arcs and $10^9$ quadratic arcs, $1.34\%$ of the \texttt{aqD+} computing time.

\section{Conclusion} \label{sec:conclusion}

This study provides a theoretical description of Adjacent Quadratic Shortest Problems and proposes an extension of Dijkstra's algorithm (\texttt{aqD}) to solve AQSPPs on graphs with no improving $\alpha$-cycles. Two enhancement procedures for the primary approach are also presented in this work: an adjacent quadratic version of A\textsuperscript{*} with backward cost-to-go estimation and an embedded calculation of adjacent quadratic costs. 
From the theoretical perspective, we present evidence that \texttt{aqD} can solve AQSPPs in polynomial time to proven optimality. We also show through computational experiments that \texttt{aqD} outperforms the algorithm proposed by \cite{rostami2015}, being more than 75$\times$ faster on very-large scale instances and a high scalability capability. This behavior is also valid for the proposed algorithm \texttt{aqA\textsuperscript{*}}, which was the fastest approach among the ones studied, outperforming \texttt{aqD} search speed by about the same amount on the benchmark analysis.
Furthermore, other test results suggest that the proposed algorithms also had significant performance under randomly generated instances, indicating their robustness. In addition, we identified a significant drawback on \texttt{Lin} method \citep{rostami2015} for highly connected graphs related to the time spent performing the linearization step.
Moreover, the algorithms were also indifferent to adjacent quadratic cost variation, already expected for \texttt{aqD} and an interesting result for \texttt{aqA\textsuperscript{*}}. Finally, results suggest that even for a high quadratic-linear cost ratio, the backward search is still profoundly relevant to enhance the \texttt{aqA\textsuperscript{*}} solution capability.

Future research shall focus on extending our findings to elementary variants of the AQSPP, relevant when the graph contains negative ($\alpha$-)cycles. This would be critical in quadratic variants of vehicle routing problems \citep{fischer-2013,martinelli2015} when solved by column generation. Another interesting avenue for future research would be to extend our approach to the case where no negative ($\alpha$-)cycles exist, but where some (pairs of) arcs may be associated to negative costs. In this context, one shall develop label-correcting variants of our method in the spirit of \cite{bellman1958}.

\section*{Acknowledgments}

This study was partially supported by PUC-Rio, by the Coordenação de Aperfeiçoamento de Pessoal de Nível Superior – Brasil (CAPES) – Finance Code 001, by the Conselho Nacional de Desenvolvimento Científico e Tecnológico (CNPq) under grant numbers 315361/2020-4 and 422470/2021-0, by Fundação (FAPERJ) under grant numbers E-26/010.101261/2018, E-26/202.825/2019, E-26/010.002282/2019 and E-26/010.002232/2019, and by the Natural Sciences and Engineering Research Council of Canada (NSERC) under Grant number 2020-06311.

\bibliographystyle{elsarticle-harv}
{\linespread{1.3}\selectfont\bibliography{aqdijkstra}}

\appendix

\section{Detailed Computational Results}

In this section, we present the complete computational results obtained in the experiments of this paper.

\subsection{Random Graphs}

In Tables \ref{tbl:erdos} and \ref{tbl:conf-model}, we present the detailed results for the Erdős-Rényi and the Configuration Model random graphs. Both generation algorithms are tested for different random graph sizes, 100 graphs for each size. For each size and graph, we present the search time of \texttt{Lin}, \texttt{aqD}, and \texttt{aqA\textsuperscript{*}} approaches.

\subsection{Quadratic Cost Variation}

In Tables \ref{tbl:lambda-erdos} and \ref{tbl:lambda-conf-model}, we present the detailed results for the quadratic cost variation experiment using the Erdős-Rényi and the Configuration Model random graphs. Both generation algorithms are tested for different $\lambda$ values, 100 graphs for each value. For each value and graph, we present the search time of \texttt{Lin}, \texttt{aqD}, and \texttt{aqA\textsuperscript{*}} approaches.

\subsection{Grid Graphs}

Table \ref{tab:grid-graphs} presents the detailed results for the grid graph instances. For each instance size, we present the search time for all tested approaches, including the modified versions of \texttt{aqD} and \texttt{aqA\textsuperscript{*}}.

\subsection{Stress Test}

Table \ref{tab:stress} presents the detailed results for the stress test using the grid graph instances. For each instance size, we present the search time for all tested approaches, including the modified versions of \texttt{aqD} and \texttt{aqA\textsuperscript{*}}.

\begin{landscape}
\setlength{\tabcolsep}{2.7pt}

\caption{Results for the stress test using Grid Graphs}
\label{tab:stress}
\end{table}

\end{document}